\theoremstyle{plain} 
\newtheorem*{bohr}{Bohr's theorem} 
\newtheorem{theorem}{Theorem} 
\newcommand{\sigmac}{\sigma_{\mathrm{c}}} 
\newcommand{\sigmau}{\sigma_{\mathrm{u}}}
\begin{document} 
\title{An extension of Bohr's theorem} 
\date{\today} 

\author{Ole Fredrik Brevig} 
\address{Department of Mathematics, University of Oslo, 0851 Oslo, Norway} 
\email{obrevig@math.uio.no}

\author{Athanasios Kouroupis} 
\address{Department of Mathematical Sciences, Norwegian University of Science and Technology (NTNU), 7491 Trondheim, Norway} 
\email{athanasios.kouroupis@ntnu.no}
\begin{abstract}
	The following extension of Bohr's theorem is established: If a somewhere convergent Dirichlet series $f$ has an analytic continuation to the half-plane $\mathbb{C}_\theta = \{s = \sigma+it\,:\, \sigma>\theta\}$ that maps $\mathbb{C}_\theta$ to $\mathbb{C} \setminus \{\alpha,\beta\}$ for complex numbers $\alpha \neq \beta$, then $f$ converges uniformly in $\mathbb{C}_{\theta+\varepsilon}$ for any $\varepsilon>0$. The extension is optimal in the sense that the assertion no longer holds should $\mathbb{C}\setminus \{\alpha,\beta\}$ be replaced with $\mathbb{C}\setminus \{\alpha\}$. 
\end{abstract}

\subjclass[2020]{Primary 30B50. Secondary 30B40, 40A30.}

\maketitle

\section{Introduction}
Let $\mathfrak{D}$ denote the class of Dirichlet series 
\begin{equation}\label{eq:diriseri} 
	f(s) = \sum_{n=1}^\infty a_n n^{-s} 
\end{equation}
that converge in at least one point $s=\sigma+it$ in the complex plane. Associated to each Dirichlet series $f$ in $\mathfrak{D}$ is a number $\sigmac(f)$, called the \emph{abscissa of convergence}, with the property that $f$ converges if $\sigma>\sigmac(f)$ and $f$ does not converge if $\sigma<\sigmac(f)$. This note concerns an extension of Bohr's classical theorem on uniform convergence of Dirichlet series~\cite{Bohr1913}. We therefore define the \emph{abscissa of uniform convergence} $\sigmau(f)$ as the infimum of the real numbers $\theta$ such that $f$ converges uniformly in the half-plane $\mathbb{C}_\theta$. Here and in what follows, we set
\[\mathbb{C}_\theta = \{s = \sigma+it\,:\, \sigma>\theta\}.\]

Our starting point reads as follows. 
\begin{bohr}
	Let $f$ be in $\mathfrak{D}$. If there is a real number $\theta$ and a bounded set $\Omega$ such that $f$ has an analytic continuation to $\mathbb{C}_\theta$ that maps $\mathbb{C}_\theta$ to $\Omega$, then $\sigmau(f) \leq \theta$. 
\end{bohr}

Queff\'{e}lec and Seip~\cite{QS2015} (see also~\cite[Theorem~8.4.1]{QQ2020}) showed that the assumption that $\Omega$ is a bounded set may be replaced with the weaker assumption that $\Omega$ is a half-plane. This extension of Bohr's theorem was applied obtain the canonical formulation of the Gordon--Hedenmalm characterization of composition operators~\cite{GH1999}, which has proven to be essential for further developments (see e.g.~\cite[Section~6]{BP2021}).

The purpose of the present note is to delineate precisely the limits to how far Bohr's theorem may be extended in terms of the mapping properties of $f$ in the half-plane $\mathbb{C}_\theta$. We will achieve this by establishing the following results. 
\begin{theorem}\label{thm:bohrext} 
	Let $f$ be in $\mathfrak{D}$. If there is a real number $\theta$ and complex numbers $\alpha\neq\beta$ such that $f$ has an analytic continuation to $\mathbb{C}_\theta$ that maps $\mathbb{C}_\theta$ to $\mathbb{C}\setminus\{\alpha,\beta\}$, then $\sigmau(f)\leq \theta$. 
\end{theorem}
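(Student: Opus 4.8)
The plan is to reduce Theorem~\ref{thm:bohrext} to the Queff\'{e}lec--Seip half-plane version of Bohr's theorem by composing $f$ with a conformal map that carries the twice-punctured plane $\mathbb{C}\setminus\{\alpha,\beta\}$ onto a half-plane, or at least into one. The natural tool is the \emph{elliptic modular function}: the universal cover of $\mathbb{C}\setminus\{0,1\}$ is the upper half-plane $\mathbb{H}$, and the modular function $\lambda\colon\mathbb{H}\to\mathbb{C}\setminus\{0,1\}$ realizes this covering. After an affine normalization sending $\alpha\mapsto 0$ and $\beta\mapsto 1$, the composition $f$ with this affine map lands in $\mathbb{C}\setminus\{0,1\}$, and I want to lift it through $\lambda$ to a map $g\colon\mathbb{C}_\theta\to\mathbb{H}$ with $\lambda\circ g = (f-\alpha)/(\beta-\alpha)$.

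First I would perform the affine change of variables so that the two omitted values become $0$ and $1$; this does not affect any of the convergence abscissae since it only adds a constant and scales, and $\sigma_{\mathrm{u}}$ of $f$ equals $\sigma_{\mathrm{u}}$ of the affinely transformed series (the constant only shifts $a_1$). Next I would invoke the lifting criterion for covering spaces: because $\mathbb{C}_\theta$ is simply connected, the holomorphic map into $\mathbb{C}\setminus\{0,1\}$ lifts to a holomorphic map $g\colon\mathbb{C}_\theta\to\mathbb{H}$. Since $g$ takes values in the upper half-plane, Queff\'{e}lec--Seip applies \emph{to $g$} provided $g$ is itself (the analytic continuation of) a somewhere-convergent Dirichlet series. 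That last point is the crux, and it is where the argument needs care: $g$ is defined only as an abstract lift and is not a priori a Dirichlet series.

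The key step, then, is to show that $g$ is bounded in every half-plane $\mathbb{C}_{\theta+\varepsilon}$ and thereby to extract uniform convergence of $f$ directly, rather than by claiming $g$ is a Dirichlet series. The route I would pursue is to apply a further conformal map from $\mathbb{H}$ to the unit disk $\mathbb{D}$, obtaining a bounded holomorphic function $h = \varphi\circ g\colon\mathbb{C}_\theta\to\mathbb{D}$, and then argue that boundedness of the analytic continuation forces the Bohr--type conclusion. Concretely, one knows that $f = T\circ h$ where $T = A\circ\lambda\circ\varphi^{-1}$ is a fixed holomorphic function on $\mathbb{D}$ (with $A$ affine), and the Dirichlet series $f$ is recovered from the bounded function $h$. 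The cleanest implementation is to run the Queff\'{e}lec--Seip machinery not on $g$ but to observe that since $h$ is bounded, any half-plane $\mathbb{C}_{\theta+\varepsilon}$ maps into a compact subset of $\mathbb{D}$ on which $T = \lambda\circ\varphi^{-1}$ (up to the affine factor) is uniformly continuous and bounded; hence $f$ maps $\mathbb{C}_{\theta+\varepsilon}$ into a \emph{bounded} set, and the original Bohr theorem yields $\sigma_{\mathrm{u}}(f)\le\theta+\varepsilon$ for every $\varepsilon>0$, i.e.\ $\sigma_{\mathrm{u}}(f)\le\theta$.

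The main obstacle I anticipate is justifying that the lift $g$ (equivalently $h$) has controlled growth near the boundary line $\{\sigma=\theta\}$, since the covering map $\lambda$ is highly non-injective and the lift could in principle approach the boundary $\partial\mathbb{H}$ (the cusps) wildly; I would handle this by working in the interior half-plane $\mathbb{C}_{\theta+\varepsilon}$, where $\overline{\mathbb{C}_{\theta+\varepsilon}}\subset\mathbb{C}_\theta$ and one has uniform interior bounds, so that $h$ maps $\mathbb{C}_{\theta+\varepsilon}$ into a set whose closure is a compact subset of $\mathbb{D}$. A secondary technical point is verifying that $f$, as the composition $T\circ h$ of the original Dirichlet series data, indeed remains the analytic continuation of the given series on all of $\mathbb{C}_{\theta+\varepsilon}$ and that applying the bounded-range Bohr theorem is legitimate there; this is routine once the compactness of the range is in hand.
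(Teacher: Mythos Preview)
Your approach and the paper's are at bottom the same idea --- Schottky's theorem is proved precisely via the universal covering $\lambda\colon\mathbb{H}\to\mathbb{C}\setminus\{0,1\}$ that you invoke --- but your execution has a genuine gap at the crucial step.

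The problem is the sentence ``since $h$ is bounded, any half-plane $\mathbb{C}_{\theta+\varepsilon}$ maps into a compact subset of $\mathbb{D}$'', together with your proposed justification that $\overline{\mathbb{C}_{\theta+\varepsilon}}\subset\mathbb{C}_\theta$ yields ``uniform interior bounds''. This is false in general: $\mathbb{C}_{\theta+\varepsilon}$ is not relatively compact in $\mathbb{C}_\theta$, so no such automatic interior estimate exists. For a concrete counterexample, take $h_0(s)=(s-\theta-1)/(s-\theta+1)$, a conformal map of $\mathbb{C}_\theta$ onto $\mathbb{D}$; then $h_0(\theta+\varepsilon+it)\to 1$ as $t\to\infty$, so $h_0(\mathbb{C}_{\theta+\varepsilon})$ is not relatively compact in $\mathbb{D}$. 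Nothing in your argument distinguishes your lift $h$ from such an $h_0$, because you never use the Dirichlet-series hypothesis beyond the two omitted values.

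What is missing is exactly the input the paper uses: since $\sigmau(f)<\infty$, one already knows $M(f,\vartheta)=\sup_t|f(\vartheta+it)|<\infty$ for some $\vartheta>\sigmau(f)$. The paper then applies Schottky's inequality on discs $\mathbb{D}(\vartheta+it,\vartheta-\theta)$ centered on the line $\sigma=\vartheta$ to push a uniform bound leftwards to any $\sigma>\theta$, and finishes with the classical Bohr theorem. In your language, this boundedness on $\sigma=\vartheta$ is what anchors the lift $h$ away (at least from one cusp) and allows a Schwarz--Pick / hyperbolic-metric argument to control $h$ on $\mathbb{C}_{\theta+\varepsilon}$; carrying that out carefully is tantamount to reproving Schottky's estimate. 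So your outline can be completed, but only by supplying the very estimate the paper quotes directly.
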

\begin{theorem}\label{thm:nomoreext} 
	There is a Dirichlet series $f$ with $\sigmac(f) \leq 1/2$, $\sigmau(f) = 1$, and
	\[f(\mathbb{C}_\theta) = \mathbb{C}\setminus \{0\}\]
	for any $1/2 \leq \theta \leq 1$.
\end{theorem}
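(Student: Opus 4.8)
The plan is to realize $f$ as the exponential of a random prime-supported Dirichlet series. Fix independent random variables $(c_p)_p$ indexed by the primes, each uniform on the unit circle, and set
\[ g(s) = \sum_{p} c_p\, p^{-s}, \qquad f(s) = \exp\bigl(g(s)\bigr). \]
Since $g$ has no constant term, $f$ is a genuine Dirichlet series with multiplicative coefficients $a_n = \prod_{p^k\,\|\,n} c_p^{\,k}/k!$ and $a_1 = 1$; in particular $f$ never vanishes wherever $g$ converges, so $f$ omits the value $0$. I will show that almost surely $f$ has all the required properties, which proves existence.

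I would first pin down $\sigmau(f) = 1$ and $\sigmac(f)\le 1/2$. For the upper bound on $\sigmau$, note $|f(s)| = \exp(\operatorname{Re} g(s)) \le \exp\bigl(\sum_p p^{-\sigma}\bigr)$, so $f$ is bounded on each $\mathbb{C}_{1+\varepsilon}$ and Bohr's theorem gives $\sigmau(f)\le 1+\varepsilon$ for every $\varepsilon>0$. For the lower bound I invoke Kronecker's theorem: the $\log p$ are rationally independent, so the translates $(p^{-it})_p$ are dense in the infinite torus, and aligning the phases yields $\sup_t \operatorname{Re} g(\sigma+it) = \sum_p p^{-\sigma} \to \infty$ as $\sigma\to 1^+$, whence $f$ is unbounded on $\mathbb{C}_1$ and $\sigmau(f)\ge 1$. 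For $\sigmac$ I use a second-moment estimate: writing $S_N = \sum_{n\le N} a_n$, the independence and uniform distribution of the $c_p$ make the $a_n$ pairwise orthogonal, so $\mathbb{E}|S_N|^2 = \sum_{n\le N}\prod_{p^k\,\|\,n}(k!)^{-2}\le N$. Since $\mathbb{E}\,|S_u|\le u^{1/2}$, Abel summation gives $\mathbb{E}\int_1^\infty |S_u|\,u^{-\sigma-1}\,du \le \int_1^\infty u^{-\sigma-1/2}\,du<\infty$ for $\sigma>1/2$, and a routine maximal-inequality argument upgrades this to almost-sure convergence of the Dirichlet series throughout $\mathbb{C}_{1/2}$. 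The same estimate applied to $g$ (using $\mathbb{E}|\sum_{p\le x} c_p|^2 = \pi(x)$) shows $g$ converges, hence is analytic, on $\mathbb{C}_{1/2}$, so $f=\exp(g)$ is analytic and zero-free there.

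The heart of the argument is the surjectivity. Because $f$ is zero-free on $\mathbb{C}_{1/2}$ and $\mathbb{C}_1\subseteq \mathbb{C}_\theta\subseteq \mathbb{C}_{1/2}$ for $1/2\le\theta\le1$, it suffices to prove $f(\mathbb{C}_1)=\mathbb{C}\setminus\{0\}$, and for this I show $g(\mathbb{C}_1)\supseteq\{w:|w|>R\}$ for some $R$. The key observation is that for every phase $\alpha$ the rotated prime zeta function $e^{i\alpha}P$, with $P(s)=\sum_p p^{-s}$, is a \emph{vertical limit} of $g$: by Kronecker's theorem there are $t_n\to\infty$ with $p^{-it_n}\to\overline{c_p}\,e^{i\alpha}$, so $g(\,\cdot\,+it_n)\to e^{i\alpha}P$ locally uniformly on $\mathbb{C}_1$. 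Hurwitz's theorem then shows every value attained by $e^{i\alpha}P$ on $\mathbb{C}_1$ is attained by $g$ on $\mathbb{C}_1$, so $g(\mathbb{C}_1)\supseteq\bigcup_\alpha e^{i\alpha}P(\mathbb{C}_1)$. Using $P(s)=\log\frac{1}{s-1}+(\text{analytic near }1)$, the range $P(\mathbb{C}_1)$ contains a horizontal half-strip $\{\operatorname{Re} w>R_1,\ |\operatorname{Im} w-b|<\delta\}$; rotating it through all angles sweeps out the complement of a bounded set, giving $g(\mathbb{C}_1)\supseteq\{|w|>R\}$. Finally, any $\zeta\neq0$ is reached by taking $w=\log|\zeta|+i(\arg\zeta+2\pi k)$ with $k$ so large that $|w|>R$, so $f(\mathbb{C}_1)=\exp(g(\mathbb{C}_1))=\mathbb{C}\setminus\{0\}$, and the inclusions above propagate this to every $\mathbb{C}_\theta$ with $1/2\le\theta\le1$.

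I expect the main obstacle to be this surjectivity step: transferring the behaviour of the explicit limit function $P$ near its logarithmic singularity back to $g$ through the Kronecker-and-Hurwitz mechanism, and verifying that the rotated half-strips genuinely exhaust a neighbourhood of infinity. By contrast, the estimates for $\sigmau$ and $\sigmac$ are comparatively routine, and the role of randomness is only to secure the cancellation $\mathbb{E}|S_N|^2\le N$ that forces $\sigmac(f)\le 1/2$ while leaving $\sigmau(f)=1$ intact.
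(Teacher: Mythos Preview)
Your construction is correct and yields a valid proof, but it proceeds quite differently from the paper. The paper takes $f=\zeta_\chi$ for a suitable character $\chi$ and then invokes two classical results as black boxes: Bohr's theorem that $\zeta(\mathbb{C}_1)=\mathbb{C}\setminus\{0\}$ (which transfers to every vertical limit function via Rouch\'{e}/Hurwitz), and Helson's theorem that for some $\chi$ the series $\zeta_\chi$ converges and is zero-free in $\mathbb{C}_{1/2}$. This gives a proof in a few lines. You instead build $f=\exp\bigl(\sum_p c_p\,p^{-s}\bigr)$ with random unimodular $c_p$ and prove all the required properties by hand: boundedness and unboundedness via Kronecker to locate $\sigmau(f)=1$; orthogonality plus a Menshov--Rademacher maximal inequality to force $\sigmac(f)\le 1/2$; and a Kronecker/Hurwitz argument to transfer the logarithmic singularity of the prime zeta function $P$ to $g$ and obtain $g(\mathbb{C}_1)\supseteq\{|w|>R\}$. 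The two approaches are morally the same --- your $f$ is a close cousin of $\zeta_\chi$, since $\log\zeta_\chi(s)=\sum_p \chi(p)p^{-s}+(\text{absolutely convergent in }\mathbb{C}_{1/2})$ --- and your argument effectively reproves the relevant pieces of Bohr's and Helson's results rather than citing them. What the paper's route buys is brevity; what yours buys is self-containment, at the cost of having to justify the maximal inequality for $\sigmac$ and the half-strip/open-mapping details for the range of $P$, which are routine but not one-liners.
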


It must be stressed that both results are fairly direct consequences of well-known techniques and results. The proof of Theorem~\ref{thm:bohrext} uses Schottsky's theorem similarly to how it is used by Titchmarsh in the introduction to \cite[Chapter~XI]{Titchmarsh1986}, while Theorem~\ref{thm:nomoreext} is deduced from results of Bohr \cite{Bohr1911} and Helson \cite{Helson1969} on the Riemann zeta function.

\subsection*{Acknowledgements.} The authors thank Herv\'{e} Queff\'{e}lec for providing helpful comments.

\section{Proof of Theorem~\ref{thm:bohrext} and Theorem~\ref{thm:nomoreext}} 
We begin with some preparation for the proof of Theorem~\ref{thm:bohrext}. Let $\mathbb{D}(c,r)$ denote the open disc with center $c$ and radius $r>0$. If $f$ is analytic and different from $0$ and $1$ in $\mathbb{D}(c,r)$, then the effective version of Schottsky's theorem due to Ahlfors \cite{Ahlfors1938} states that 
\begin{equation}\label{eq:schottsky} 
	|f(s)| \leq \exp\left(\frac{r+|s-c|}{r-|s-c|}\big(7+\max(0,\log|f(c)|)\big)\right) 
\end{equation}
for all $s$ in $\mathbb{D}(c,r)$. (We do not actually require the effective version of Schottsky's theorem, but we find it more convenient to work with explicit expressions.)
\begin{proof}[Proof of Theorem~\ref{thm:bohrext}] 
	We may assume without loss of generality that $\alpha=0$ and $\beta=1$. It is well-known (see e.g.~\cite[Chapter~4.2]{QQ2020}) that $\sigmau(f) \leq \sigmac(f) + 1$, so every Dirichlet series in $\mathfrak{D}$ converges uniformly in some half-plane. For $\vartheta>\theta$, we set
	\[M(f,\vartheta) = \sup_{t \in \mathbb{R}} |f(\vartheta+it)|.\]
	It is plain that $M(f,\vartheta)<\infty$ if $\vartheta>\sigmau(f)$. We fix $\vartheta>\sigmau(f)$ and apply \eqref{eq:schottsky} with $c = \vartheta+it$, $r=\vartheta-\theta$, and $s = \sigma+it$, to infer that if $\theta<\sigma<\vartheta$, then 
	\begin{equation}\label{eq:schottsky2} 
		|f(s)| \leq \exp\left(\frac{2(\vartheta-\theta)}{\sigma-\theta}\big(7+\max(0,\log|M(f,\vartheta)|)\big)\right). 
	\end{equation}
	This demonstrates that $f$ is bounded in $\mathbb{C}_{\theta+\varepsilon}$ for any $\varepsilon>0$, and, consequently, that $\sigmau(f) \leq \theta$ by Bohr's theorem. 
\end{proof}

Ritt \cite[Theorem~II]{Ritt1928} established a version of Schottsky's theorem for convergent Dirichlet series. This result provides an upper bound similar to \eqref{eq:schottsky2} that is valid in all of $\mathbb{C}_\theta$ and that only depends on $\theta$ and $a_1$, under the additional assumption that $a_1$ is not equal to $0$ or $1$. Here $a_1$ denotes the first coefficient in the series \eqref{eq:diriseri}.

To prepare for the proof of Theorem~\ref{thm:nomoreext}, we consider the vertical translation
\[V_\tau f(s) = f(s+i\tau).\]
The \emph{vertical limit functions} of a Dirichlet series $f$ in $\mathfrak{D}$ are the functions which can be obtained as uniform limits of sequences of vertical translations $(V_{\tau_k} f)_{k\geq1}$ in $\mathbb{C}_\theta$ for any fixed $\theta>\sigmau(f)$. Recall from \cite[Section~2.3]{HLS1997} that the vertical limit functions of the Dirichlet series \eqref{eq:diriseri} coincide with the Dirichlet series of the form
\[f_\chi(s) = \sum_{n=1}^\infty a_n \chi(n) n^{-s},\]
where $\chi$ is a completely multiplicative function from the natural numbers to the unit circle. 

Certain properties of $f$ are preserved under vertical limits. For instance, Bohr's theorem implies that if $f$ is in $\mathfrak{D}$, then $\sigmau(f)=\sigmau(f_\chi)$ for any $\chi$. A consequence of Rouch\'{e}'s theorem (see e.g.~\cite[Lemma~1]{BP2020}) is that $f_\chi(\mathbb{C}_\theta)=f(\mathbb{C}_\theta)$ for any $\chi$ and any $\theta \geq \sigmau(f)$. However, the abscissa of convergence for $f$ and $f_\chi$ may in general be different (see \cite{HLS1997,Helson1969} or \cite[Chapter~8.4]{QQ2020}).
\begin{proof}[Proof of Theorem~\ref{thm:nomoreext}] 
	We begin with the Riemann zeta function
	\[\zeta(s) = \sum_{n=1}^\infty n^{-s},\]
	which satisfies $\sigmac(\zeta)=\sigmau(\zeta)=1$. A result of Bohr \cite{Bohr1911} (see also~\cite[Chapter~4.5]{QQ2020}) asserts that $\zeta(\mathbb{C}_1) = \mathbb{C}\setminus \{0\}$. By the discussion above, it follows that $\sigmau(\zeta_\chi)=1$ and that $\zeta_\chi(\mathbb{C}_1) = \mathbb{C}\setminus \{0\}$ for any $\chi$. Helson \cite{Helson1969} established that there are $\chi$ such that the Dirichlet series $\zeta_\chi$ converges and does not vanish in the half-plane $\mathbb{C}_{1/2}$. Choosing $f = \zeta_\chi$ for such a $\chi$, we obtain the stated result. 
\end{proof}

\bibliographystyle{amsplain-nodash} 
\bibliography{bohrext}

\end{document}